\documentclass{amsart}

\usepackage{amsmath}
\usepackage{amsfonts}
\usepackage{amsthm}

\newtheorem{lemma}{Lemma}
\newtheorem{prop}{Proposition}
\newtheorem{theorem}{Theorem}

\begin{document}
\title{(Non)Uniqueness of critical points in variational data assimilation}
\author{Graham Cox}
\email{ghcox@email.unc.edu}
\address{Department of Mathematics, UNC Chapel Hill, Phillips Hall CB \#3250, Chapel Hill, NC 27599}

\begin{abstract}
In this paper we apply the 4D-Var data assimilation scheme to the initialization problem for a family of quasilinear evolution equations. The resulting variational problem is non-convex, so it need not have a unique minimizer. We comment on the implications of non-uniqueness in numerical applications, then prove uniqueness results in the following situations: 1) the observational times are all sufficiently small; 2) the prior covariance is sufficiently small. We also give an example of a data set where the cost functional has a critical point of arbitrarily large Morse index.

\smallskip
\noindent \textbf{Keywords.} Variational data assimilation; Inverse problems; Quasilinear evolution equations.
\end{abstract}

\maketitle

%

\section{Introduction}
An important problem in data assimilation is to estimate the initial state of a physical system when only given access to noisy, incomplete observations of the state at later times. To make this more precise, suppose $y(t)$ solves an evolution equation $y_t = F(y)$ in some function space $V$, and the observations of the state are given by a bounded linear operator $H: V \rightarrow \mathbb{R}^q$. Then given observations $z_1, \ldots, z_N \in \mathbb{R}^q$ at times $t_1 <  \cdots < t_N$, one would like to find the initial condition $u = y(0)$ that best matches the empirical data.

Of course it is important to carefully formulate what is meant by the ``best" initial condition, to ensure that the problem is well-posed and has a physically meaningful solution. One approach is to minimize the log-likelihood
\begin{align*}
	\sum_{i=1}^N \left| R^{-1/2} \left(H y(t_i) - z_i \right) \right|^2
\end{align*}
over the set of all possible initial conditions $u$, where $R$ is the observational covariance matrix, and $y(t_i)$ is the solution to the evolution equation with initial condition $u$, evaluated at time $t_i$. However, the resulting variational problem turns out to be ill-posed, in the sense that it does not necessarily possess a minimizer in $V$.

One possible resolution is to add a regularization term to the cost functional, of the form
\begin{align}
	J(u) := \frac{1}{2} \sum_{i=1}^N \left| R^{-1/2} \left(H y(t_i) - z_i \right) \right|^2 + \frac{1}{2 \sigma^2} \| u - u_0\|_V^2
	\label{eqn:cost}
\end{align}
for some fixed $u_0 \in V$ and $\sigma > 0$. The analytic motivation for this is clear---the cost functional is now coercive over $V$ and hence can be shown through standard variational methods to admit a minimizer (see \cite{S10} and \cite{W93} for details). From a data analysis point of view, there is a Bayesian interpretation of (\ref{eqn:cost}) in which the regularization term corresponds to a Gaussian prior distribution with covariance proportional to $\sigma^2$. 

It is common practice (see, for instance, \cite{CPZ08,DT86,LKW06,NW07}) to solve a suitable discretization of the regularized variational problem using a gradient-based algorithm. Implicit in the implementation of such an algorithm is the assumption of a unique minimizer for the variational problem---gradient descent methods are of course local and do not have the ability to distinguish between local and global minima. The problem of uniqueness has so far received little attention in the literature. A short-time uniqueness result for Burgers' equation appeared in \cite{W93} under the assumption of continuous-in-time observations, using the cost functional
\begin{align*}
	\int_0^T \left| H y(t) - z(t) \right|^2 dt + \frac{1}{2 \sigma^2} \| u - u_0\|_V^2.
\end{align*}
There it was shown that the variational problem admits a unique minimizer when the maximal observation time, $T$, is sufficiently small. 

The discrete problem was investigated numerically in \cite{AAR10}, where a unique minimizer was observed as long as $\sigma > 0$. For the non-regularized $\sigma=0$ case (corresponding to an improper prior in the Bayesian formulation) multiple minimizers were found numerically.

The goal of this paper is to give a rigorous Bayesian formulation of the variational problem for a family of quasilinear evolution equations (which includes reaction-diffusion equations and viscous conservation laws) and determine sufficient conditions to guarantee unimodality of the resulting posterior distribution.

\subsection{Some notation and conventions}
Throughout we denote the $L^2(0,1)$ norm and inner product by $\| \cdot \|$ and $\left< \cdot, \cdot \right>$, respectively. We let $V = H^1_0(0,1)$, with norm $\|u\|_V := \|u_x\|$. This is clearly equivalent to the standard $H^1$ norm, because $\pi \|u\| \leq \|u_x\|$ for any $u \in H^1(0,1)$. It is well known that $H^1_0(0,1) \subset L^{\infty}(0,1)$, with $\sup |u| \leq \|u_x\|$. We will frequently make use of the inequality between the arithmetic and geometric means,
\begin{align}
	2ab \leq \lambda a^2 + \lambda^{-1} b^2
\end{align}
for any positive $a$, $b$ and $\lambda$, which we refer to as the AM--GM inequality.


\section{Statement of results}
For the remainder of the paper we consider a quasilinear parabolic equation
\begin{align}
	y_t + f(y)_x = y_{xx} + r(y)
	\label{eqn:evol}
\end{align}
on the interval $[0,1]$, with Dirichlet boundary conditions. We make the standing assumption that $f$ and $r$ are both of class $C^2$. This is more than sufficient to guarantee that the initial value problem for (\ref{eqn:evol}) is well-posed, as will be seen in Proposition \ref{prop:exist}. The additional regularity is needed in computing the first and second variation of the cost functional. We also need to ensure that the initial value problem admits a global (in time) solution for any initial value, so that $J$ is well-defined on all of $H^1_0$. This will be the case if
\begin{align}
	\int_{-\infty}^0 \frac{1}{|r(y)|+1} dy = \int_0^{\infty} \frac{1}{|r(y)|+1} dy = \infty.
	\label{growth}
\end{align}
If this condition is not satisfied, there may exist initial conditions for which the solution blows up in a finite amount of time. 

We also assume that the observation operator $H$ is bounded on $L^2$, and hence has a bounded adjoint $H^*: \mathbb{R}^q \rightarrow L^2$.

Our first result is that the problem has a natural Bayesian formulation with respect to a Gaussian prior distribution, the significance of which will be discussed in Section \ref{sec:bayes}. This requires a further assumption on $r$ and $f$ that will not be needed elsewhere in the paper.

\begin{theorem} Let $\mu_0$ denote the Gaussian measure on $L^2(0,1)$ with covariance $\mathcal{C} = -\sigma^2 \Delta^{-1}$ and mean $u_0$, and suppose that $r(y)$ and $f'(y)$ are uniformly Lipschitz. Then there is a well-defined posterior measure $\mu_z$, with Radon--Nikodym derivative
\begin{align}
	\frac{d \mu_z}{d \mu_0}(u) \propto \exp \left\{ - \sum_{i=1}^N \left| R^{-1/2} \left(H y(t_i) - z_i \right) \right|^2 \right\}.
	\label{eqn:RNderiv}
\end{align}
Moreover, the mean and covariance of the posterior distribution are continuous functions of the data, $z = \{z_i\}$.
\label{thm:bayes}
\end{theorem}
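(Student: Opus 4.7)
The plan is to cast the problem in the abstract Bayesian inverse problem framework for Gaussian priors on a separable Hilbert space (developed by Stuart and collaborators), so that both claims in the theorem reduce to verifying a handful of regularity properties of the forward observation map. Writing the potential as
\begin{align*}
\Phi(u;z) := \sum_{i=1}^N \left| R^{-1/2}\bigl(H y(t_i;u) - z_i\bigr) \right|^2,
\end{align*}
so that (\ref{eqn:RNderiv}) reads $d\mu_z/d\mu_0 \propto e^{-\Phi(u;z)}$, the task reduces to establishing three properties of $u \mapsto y(t_i;u)$ on $L^2(0,1)$, which is the space supporting $\mu_0$: (i) $y(t;u)$ exists globally for $\mu_0$-a.e.\ $u$; (ii) a growth bound $\|y(t_i;u)\| \leq g_1(\|u\|)$; and (iii) a Lipschitz bound $\|y(t_i;u) - y(t_i;v)\| \leq g_2(\max(\|u\|,\|v\|))\,\|u-v\|$, with $g_1,g_2$ continuous and at most polynomially growing.

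Property (i) follows from the global existence hypothesis (\ref{growth}) together with the smoothing of the parabolic flow, so that an $L^2$ datum yields $y(t)\in H^1_0$ for every $t>0$. For (ii) and (iii) I would apply $L^2$ energy estimates to $y(t;u)$ and to the difference $w := y(t;u)-y(t;v)$. The uniform Lipschitz hypotheses on $r$ and $f'$ are precisely what is needed: after integration by parts, the convection difference contributes a term of the form $f'(\xi) w_x \cdot w$, which can be absorbed into the parabolic dissipation $\|w_x\|^2$ via AM--GM under the uniform control of $f'$, while the reaction difference $r(y(u))-r(y(v))$ becomes linear in $\|w\|$. Gronwall then closes both estimates.

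Once (i)--(iii) are secured, $\Phi$ is nonnegative, locally Lipschitz in $u$, and Lipschitz in $z$ uniformly in $u$ on bounded sets, with Lipschitz constants of polynomial growth in $\|u\|$. The normalizer $Z(z) := \int e^{-\Phi(u;z)}\, d\mu_0(u)$ satisfies $0 < Z(z) \leq 1$: the upper bound is immediate from $\Phi\geq 0$, and positivity follows because $\Phi(\cdot;z)$ is $\mu_0$-a.s.\ finite and continuous, hence bounded on a set of positive $\mu_0$-measure. This gives $\mu_z = Z(z)^{-1} e^{-\Phi(\cdot;z)}\, \mu_0$ as a well-defined probability measure, establishing the first conclusion.

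For continuity of the posterior mean and covariance, I would bound the Hellinger distance by
\begin{align*}
d_H(\mu_z,\mu_{z'})^2 \leq C(z,z')\,|z-z'|^2,
\end{align*}
with $C$ locally bounded, by expanding the squared difference of the Radon--Nikodym square roots, using the Lipschitz-in-$z$ bound on $\Phi$, and invoking Fernique's theorem to integrate the polynomial $u$-factors against $\mu_0$. Since Hellinger convergence dominates the $\mu_z$-expectation of any functional with an $L^2(\mu_0)$-integrable envelope, and the mean and covariance correspond to expectations of degree-one and degree-two polynomial functionals of $u$, both depend continuously on $z$. The main obstacle is step (iii): a Lipschitz forward-map estimate in the $L^2$ topology is genuinely delicate for a quasilinear PDE whose leading nonlinearity carries a derivative, and this is the single place in the paper where the uniform Lipschitz hypothesis on $f'$ (rather than mere $C^2$) cannot be dropped.
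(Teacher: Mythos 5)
Your proposal is correct and follows essentially the same route as the paper: both reduce the theorem to Stuart's abstract framework (the paper cites Corollary 4.4 of \cite{S10}) and verify the same three conditions --- full $\mu_0$-measure of $L^2(0,1)$, a polynomial growth bound on the forward map, and a local Lipschitz bound on bounded sets --- via $L^2$ energy estimates and Gronwall, with the uniform Lipschitz hypotheses on $r$ and $f'$ entering exactly where you say they do. The only difference is that you re-derive the normalizability and Hellinger-continuity conclusions that the paper outsources to the cited corollary.
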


In fact, one has that the posterior measure is Lipschitz with respect to the Hellinger metric; the reader is referred to \cite{S10} for further details. The nontriviality of this result is due to the infinite-dimensional setting of the problem. Because there is no analog of the Lebesgue measure for infinite-dimensional spaces, one cannot define the posterior measure using the exponential of $J$ as a density, as is done in finite dimensions. Thus it is necessary to define the posterior relative to the prior distribution, and care must be taken to ensure that this density, given by (\ref{eqn:RNderiv}), is in fact $\mu_0$-integrable and hence can be normalized. This normalizability will follow from estimates on solutions to the nonlinear evolution equation.

There is thus a Bayesian formulation of the regularized variational problem, for which the MAP (Maximum \textit{A Posteriori}) estimators are precisely the global minima of the cost functional (\ref{eqn:cost}). With this framework in mind, we study the uniqueness and non-uniqueness of minima for $J(u)$.

We assume throughout that the data are uniformly bounded, with
\begin{align}
	|z_i| \leq D
\end{align}
for all $i$. Our first result is that $J$ has a unique minimizer when all of the observational times are sufficiently small. 
\begin{theorem} There is a constant $T_0$, depending on $N$, $D$, $\|u_0\|$ and $\sigma$, such that (\ref{eqn:cost}) has a unique global minimum in $V$ if $t_N < T_0$.
\label{thm:shorttime}
\end{theorem}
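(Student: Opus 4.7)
The approach is to show that for $t_N$ sufficiently small, the cost functional $J$ is strictly convex on a closed ball in $V$ that is guaranteed to contain every global minimizer. First, any global minimum $u_*$ must satisfy $J(u_*) \leq J(u_0)$, so the regularization term yields
\[
\|u_* - u_0\|_V^2 \leq 2\sigma^2 J(u_0).
\]
Standard energy estimates for (\ref{eqn:evol}) on a fixed horizon $[0,T_0]$, together with $|z_i|\leq D$ and boundedness of $H : L^2 \to \mathbb{R}^q$, bound $J(u_0)$ by a constant depending only on $N,D,\|u_0\|_V,\sigma,T_0$, so $\|u_*\|_V \leq M_0$ for such an $M_0$. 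Let $K = \{u \in V : \|u\|_V \leq M_0\}$; this set is convex, and if $J$ were strictly convex on $K$ then two distinct global minima $u_1, u_2 \in K$ would give $J(\tfrac{u_1+u_2}{2}) < \tfrac{1}{2}(J(u_1)+J(u_2)) = \min J$, a contradiction. Thus it suffices to prove strict convexity of $J$ on $K$ when $t_N$ is small.

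A direct computation of the Hessian at $u \in K$ in a direction $h \in V$ gives
\[
D^2J(u)(h,h) = \sum_{i=1}^N \bigl|R^{-1/2}Hv(t_i)\bigr|^2 + \sum_{i=1}^N \bigl\langle R^{-1}(Hy(t_i)-z_i),\, Hw(t_i)\bigr\rangle + \frac{1}{\sigma^2}\|h\|_V^2,
\]
where $v = Dy(u)[h]$ solves the linearization $v_t + (f'(y)v)_x = v_{xx} + r'(y)v$ with $v(0) = h$, and $w = D^2 y(u)[h,h]$ solves a further linearization of the same form with $w(0) = 0$ and source term quadratic in $v$. The first and third terms in $D^2J$ are nonnegative, so only the cross term is potentially harmful. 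Because $\|u\|_V \leq M_0$ controls $\sup|u|$ via $H^1_0 \subset L^\infty$, standard energy estimates (using the $C^2$ regularity of $f,r$ on the resulting compact range) give, on $[0,T_0]$: (i) a uniform bound on $\|y(t)\|$ in terms of $M_0$; (ii) $\sup_{[0,T_0]}\|v(t)\|_V \leq C_1\|h\|_V$; and, crucially, (iii) $\|w(t)\| \leq C_2\, t\, \|h\|_V^2$, reflecting that $w$ starts at zero and is driven by a source quadratic in $v$. Combining these with $|z_i| \leq D$ and boundedness of $H$, the cross term is dominated in modulus by $C_3 N t_N \|h\|_V^2$, so
\[
D^2J(u)(h,h) \geq \bigl(\sigma^{-2} - C_3 N t_N\bigr)\|h\|_V^2.
\]
Choosing $T_0 < 1/(C_3 N \sigma^2)$ then gives strict convexity of $J$ throughout $K$.

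The main obstacle is estimate (iii): it is not enough to know that $\|w(t)\|$ is bounded---we need the explicit $O(t)$ decay, since this is what pays for the potentially large factor $\|Hy(t_i) - z_i\|$ in the cross term. Although $w$ satisfies a linear parabolic equation with coefficients that are bounded once $\sup|y|$ is controlled, obtaining the $O(t)$ rate requires a Duhamel or $L^2$ energy argument that first produces a uniform $V$-bound on $v$ (so that the quadratic source $v^2$ and its $x$-derivative can be estimated), and tracks how these bounds depend on $M_0$ and $T_0$. Once all constants are explicit in $M_0, N, D, \sigma, T_0$, satisfying simultaneously the coercivity threshold and the a priori bound defining $M_0$ is a straightforward choice of $T_0$.
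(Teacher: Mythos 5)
Your overall strategy --- an a priori $V$-bound on every global minimizer (via $J(u_*)\le J(u_0)$, versus the paper's comparison $J(u^*)\le J(0)$), followed by strict convexity of $J$ on the resulting convex ball when $t_N$ is small --- is exactly the paper's, but you treat the dangerous Hessian term by a genuinely different route. The paper never estimates $w=D^2y(u)[h,h]$ directly: it uses the adjoint variable $p$ (which carries the jumps $H^*R^{-1}(Hy(t_i)-z_i)$) to rewrite the cross term as $\int_0^{t_N}\left[\langle r''(y)\eta^2,p\rangle+\langle f''(y)\eta^2,p_x\rangle\right]dt$, and the smallness then comes from $\int_0^{t_N}\|p_x\|\,dt\le C\sqrt{t_N}e^{2\beta t_N}$ together with an $L^4$ bound on $\eta$. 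Your plan of estimating $w$ itself, exploiting $w(0)=0$ and the quadratic source, is legitimate and arguably more direct, but two points need adjustment. First, the claimed rate $\|w(t)\|\le C_2\,t\,\|h\|_V^2$ is stronger than what your own proposed method delivers: the source contains $(f''(y)v^2)_x$ (note $f$ is only $C^2$, so the derivative must be moved onto $w$ by integration by parts and absorbed into $-\|w_x\|^2$), and the resulting differential inequality $\frac{d}{dt}\|w\|^2\le C\|w\|^2+C'\|v^2\|^2$ yields only $\|w(t)\|\le C\sqrt{t}\,\|h\|_V^2$; this $O(\sqrt{t})$ rate is, however, all you need, and matches the paper's final bound $\Gamma\|v\|_V^2\sqrt{t_N}$. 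Second, your intermediate claim $\sup_t\|v(t)\|_V\le C_1\|h\|_V$ is both stronger than necessary and harder to prove than what actually controls the source, namely the $L^4$ estimate $\|v^2(t)\|\le\|h^2\|e^{\alpha t}$, which follows from a clean computation of $\frac{d}{dt}\int v^4$ without differentiating the linearized equation. With those repairs your argument closes, and your remark about choosing $T_0$ consistently with the definition of $M_0$ is unproblematic since $M_0$ stays bounded as $t_N\to 0$.
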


The time $T_0$ also depends on the observation operator, $H$ and the observational covariance, $R$, but we consider these to be fixed throughout, and hence will not explicitly note this dependence. We will similarly not mention any dependence of constants on the functions $f$ and $r$ in (\ref{eqn:evol}), though this dependence can easily be deduced from the proofs if desired. 

The theorem is proved in Section \ref{sec:unproofs} by first observing that all minimizers are contained in a fixed ball $B \subset V$, then showing that the cost functional is convex over $B$ as long as the observational times are small enough that nonlinear effects are not yet dominant. This differs from the uniqueness result in \cite{W93} because in the discrete-time case there are non-vanishing contributions to the cost functional even as $t_N \rightarrow 0$, whereas in the continuous case the observational term
\begin{align*}
	\int_0^T \left| H y(t) - z(t) \right|^2 dt
\end{align*}
vanishes in the $T = 0$ limit. For this reason we need to consider the second variation of the cost functional. In the continuous case the Euler--Lagrange equation can be expressed as a fixed-point equation for a nonlinear map that is a contraction for small $T$, but this contractive property is easily seen to fail in the discrete case, even for linear equations.




We next show that it is possible to obtain a uniqueness result for any set of observational times, provided the observational covariance is sufficiently small.

\begin{theorem} There is a constant $\sigma_0 > 0$, depending on, $N$, $D$, $\|u_0\|$ and $t_N$, such that (\ref{eqn:cost}) has a unique global minimum in $V$ if $\sigma < \sigma_0$.
\label{thm:alltime}
\end{theorem}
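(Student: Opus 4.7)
The approach is to exploit the fact that as $\sigma\to 0$ the regularization term $\frac{1}{2\sigma^2}\|u-u_0\|_V^2$ dominates $J$, forcing any minimizer into a small $V$-neighborhood of $u_0$ and rendering the Hessian of $J$ strictly positive there. The argument has two steps: localize the minimizers in a convex ball around $u_0$, then establish strict convexity of $J$ on that ball.

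For localization, a global minimizer $u^\ast$ satisfies $J(u^\ast)\leq J(u_0)$. At $u_0$ the regularization term vanishes, and the observational term is bounded by a constant $C_1 = C_1(N,D,\|u_0\|,t_N)$ via standard a priori estimates for the evolution equation (\ref{eqn:evol}). Hence
\begin{equation*}
\tfrac{1}{2\sigma^2}\|u^\ast - u_0\|_V^2 \leq J(u_0) \leq C_1,
\end{equation*}
so every minimizer lies in the convex ball $B_\sigma := \{u\in V:\|u-u_0\|_V\leq \sqrt{2C_1}\,\sigma\}$. I shall insist $\sigma_0\leq 1$, which makes $B_\sigma$ contained in a fixed $V$-bounded set independent of $\sigma$.

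For the convexity step, direct differentiation gives
\begin{equation*}
D^2J(u)[h,h] = \sum_{i=1}^N \bigl|R^{-1/2}H\eta_i\bigr|^2 + \sum_{i=1}^N \bigl\langle R^{-1}(Hy(t_i)-z_i),\,H\xi_i\bigr\rangle + \frac{1}{\sigma^2}\|h\|_V^2,
\end{equation*}
where $\eta_i = Dy(t_i)[h]$ and $\xi_i = D^2y(t_i)[h,h]$ solve the first- and second-variation equations associated with (\ref{eqn:evol}). The first sum is nonnegative, and energy estimates for these linear inhomogeneous parabolic equations yield bounds $\|\eta_i\|\leq K\|h\|_V$ and $\|\xi_i\|\leq K\|h\|_V^2$, with $K$ depending only on $N,D,\|u_0\|,t_N$, uniformly over $u\in B_\sigma$. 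Combined with $|Hy(t_i)-z_i|\leq K'$ on $B_\sigma$, this yields
\begin{equation*}
D^2J(u)[h,h] \;\geq\; \left(\frac{1}{\sigma^2} - C_3\right)\|h\|_V^2 \qquad\text{for all } u\in B_\sigma,
\end{equation*}
for some $C_3 = C_3(N,D,\|u_0\|,t_N)$. Choosing $\sigma_0 := \min\{1,\,1/\sqrt{2C_3}\}$ makes $J$ strictly convex on the convex set $B_\sigma$, so two distinct minimizers in $B_\sigma$ are impossible; combined with localization this gives uniqueness in all of $V$, and existence follows from the coercivity of $J$ as in \cite{S10}.

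The principal obstacle is bounding the middle term in $D^2J(u)[h,h]$: one must estimate $\|H\xi_i\|$ quadratically in $\|h\|_V$ uniformly in $u\in B_\sigma$. The second-variation equation is forced by a quadratic form in $\eta_i$ and its spatial derivative, and the quasilinear flux $f(y)_x$ contributes a transport term with coefficient $f'(y)$; controlling these contributions requires absorbing first-order derivative terms into the diffusion via the AM--GM inequality flagged in the introduction, and is what ties the constants to $t_N$.
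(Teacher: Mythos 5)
Your proposal is correct and its overall strategy coincides with the paper's: localize all global minimizers in a fixed convex set by comparing $J(u^*)$ with the value of $J$ at an explicit point, then show the Hessian is positive definite on that set once $\sigma^{-2}$ dominates the nonlinear contribution. Two implementation choices differ. First, you compare with $J(u_0)$ rather than $J(0)$, which traps minimizers in a ball of radius $O(\sigma)$ about $u_0$; the paper's Lemma \ref{lem:apriori} instead gives a ball about the origin whose radius $A$ is nondecreasing in $\sigma$ --- both deliver the uniformity over $\sigma \le 1$ that the argument needs. Second, and more substantively, you control the indefinite middle term of the Hessian by estimating the second variation $\xi_i = \omega(t_i)$ directly, whereas the paper never estimates $\omega$ at all: it rewrites $\sum_i \left< \omega(t_i), H^* R^{-1}(H y(t_i) - z_i) \right>$ as the adjoint integral $\int_0^{t_N} \left[ \left< r''(y)\eta^2, p \right> + \left< f''(y)\eta^2, p_x \right> \right] dt$ and then estimates $p$, $\int \|p_x\|$, and $\|\eta^2\|$ (Lemmas \ref{etaL4} and \ref{pL2}). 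Your route buys a self-contained primal argument at the cost of one extra parabolic energy estimate; the paper's buys a single formula, (\ref{eqn:2var}), that simultaneously drives Theorems \ref{thm:shorttime} and \ref{thm:non}. One detail to repair when you write out the energy estimate for $\xi_i$: the forcing is quadratic in $\eta$, so after integrating $[f''(y)\eta^2]_x$ by parts and absorbing the derivative into the diffusion via AM--GM you are left with $\|\eta^2\|^2 = \|\eta\|_{L^4}^4$; the $L^2$ bound $\|\eta_i\| \le K\|h\|_V$ that you state is therefore not quite sufficient, and you need the $L^4$ bound $\|\eta^2(t)\| \le \|h^2\| e^{\alpha t} \le \|h\|_V^2 e^{\alpha t}$ (the paper's Lemma \ref{etaL4}), which does follow from the same technique you describe.
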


We will see explicitly in (\ref{eqn:2var}) how $f''$ and $r''$ can lead to nonconvexity in $J$. The general idea behind the preceding uniqueness theorems is thus to determine under what conditions these nonlinear effects can be dominated by the linear term coming from the Gaussian prior distribution.

It will be seen in the proofs below that Theorems \ref{thm:shorttime} and \ref{thm:alltime} (as well as the uniqueness result in \cite{W93}) in fact establish the stronger result that the cost functional has a unique critical point in a closed subset of $H^1_0$ that necessarily contains any global minima. This  observation could be useful in implementing a gradient descent method, because it says that one can avoid suprious local minima by ensuring that the algorithm starts in the bounded region given by Lemma \ref{lem:apriori}, where $J$ is known to be convex.

Our final result shows that the behavior of $J$ can be rather complicated in general.

\begin{theorem} Consider a reaction-diffusion equation $y_t = y_{xx} + r(y)$ where $r(0) = r'(0) = 0$ and $r''(0) \neq 0$. Let $H$ denote projection onto the first Fourier coefficient, and set $R = 1$. Then for any positive integer $q$ and times $t_1 < \cdots < t_N$ there exist data $\{z_i\}$ and a prior $u_0$ such that $u \equiv 0$ is a critical point of $J$ with Morse index greater than or equal to $q$.
\label{thm:non}
\end{theorem}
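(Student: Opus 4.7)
My plan is to exploit the fact that $r(0) = r'(0) = 0$ forces $u \equiv 0$ to generate the trivial trajectory $y(t) \equiv 0$. Linearizing about this stationary solution reduces to the heat equation, and this lets me compute both variations of $J$ at $u = 0$ in essentially closed form. The idea is: (i) show the first-variation condition at $u = 0$ can be satisfied for any scalar data $\{z_i\}$ by choosing $u_0$ appropriately; (ii) restrict attention to a $q$-dimensional subspace $W$ on which one of the positive contributions to the second variation automatically vanishes; (iii) choose the $z_i$ with large common magnitude and sign matching $r''(0)$ so the remaining quadratic form dominates on $W$.

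The linearization $\eta_t = \eta_{xx} + r'(y)\eta$ at $y \equiv 0$ becomes the heat equation with $\eta(0) = v$, so $\eta(t) = e^{t\Delta} v$. Writing $e_1 = \sqrt{2}\sin(\pi x)$ for the first Dirichlet eigenfunction (so that $H^* c = c\, e_1$ for $c \in \mathbb{R}$), a direct computation yields
\begin{equation*}
dJ(0)(v) = \Bigl\langle -\Bigl(\sum_{i=1}^N z_i e^{-\pi^2 t_i}\Bigr) e_1 - \tfrac{1}{\sigma^2}\Delta u_0,\; v \Bigr\rangle.
\end{equation*}
This vanishes for every $v \in V$ precisely when $u_0 = -\tfrac{\sigma^2}{\pi^2}\bigl(\sum_i z_i e^{-\pi^2 t_i}\bigr) e_1 \in V$, so for any scalar data there is a unique such $u_0$ making $u = 0$ a critical point of $J$.

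For the second variation, expand $y(t;\epsilon v) = \epsilon\eta(t) + \tfrac{\epsilon^2}{2} y_2(t) + O(\epsilon^3)$; the quadratic coefficient satisfies $\partial_t y_2 = \partial_{xx} y_2 + r''(0)\eta^2$ with $y_2(0) = 0$, hence $y_2(t) = r''(0)\int_0^t e^{(t-s)\Delta} \eta(s)^2\,ds$. One computes
\begin{equation*}
d^2 J(0)(v,v) = \sum_{i=1}^N |H\eta(t_i)|^2 - r''(0)\sum_{i=1}^N z_i \int_0^{t_i} e^{-\pi^2(t_i - s)} \langle e_1, \eta(s)^2\rangle\, ds + \tfrac{1}{\sigma^2}\|v_x\|^2.
\end{equation*}
Let $W = \mathrm{span}\{e_2,\ldots,e_{q+1}\}$ with $e_k(x) = \sqrt{2}\sin(k\pi x)$. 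For $v \in W$ the first sum vanishes, since $H\eta(t_i) = e^{-\pi^2 t_i}\langle e_1, v\rangle = 0$. Now take all $z_i = \lambda\,\mathrm{sgn}(r''(0))$ with $\lambda > 0$; the middle term becomes $-\lambda|r''(0)| \sum_i \int_0^{t_i} e^{-\pi^2(t_i - s)}\langle e_1, \eta(s)^2\rangle\, ds$. Because $e_1(x) > 0$ on $(0,1)$ and the heat semigroup is injective (so $v \neq 0$ implies $\eta(s) \not\equiv 0$ and hence $\eta(s)^2 \not\equiv 0$), the integrand is strictly positive, so this term defines a negative-definite quadratic form on $W$. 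By finite-dimensionality it is bounded above by $-c\lambda\|v\|^2$ for some $c > 0$, while $\tfrac{1}{\sigma^2}\|v_x\|^2 \leq \tfrac{(q+1)^2\pi^2}{\sigma^2}\|v\|^2$ on $W$. Taking $\lambda$ large enough yields $d^2 J(0)(v,v) < 0$ on $W \setminus\{0\}$, and choosing $u_0$ from the first-variation formula above produces a critical point at $u=0$ with Morse index at least $q$.

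The step I expect to require the most care is the rigorous second-order expansion of the solution map $u \mapsto y(\,\cdot\,; u)$ in a topology where the formal manipulations above are justified and $\eta(s)^2$ is handled pointwise (for instance in $C([0,t_N]; H^1_0)$, using the embedding $H^1_0 \hookrightarrow L^\infty$ noted in the paper). Once that expansion is in hand, the rest is essentially algebraic, resting on the structural fact that the first Dirichlet eigenfunction is strictly positive on the interior of the interval.
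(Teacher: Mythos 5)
Your proposal is correct and follows the same overall strategy as the paper's proof: use $r(0)=r'(0)=0$ so that $u\equiv 0$ generates the trivial trajectory and the linearization is the heat equation, choose the data first to make the Hessian indefinite, and only afterwards pick $u_0$ to make $0$ a critical point (the paper takes $u_0=\sigma^2\Delta^{-1}p(0)$, which agrees with your closed form up to the harmless sign slip between your displayed first variation and your final formula for $u_0$). The executions differ in two ways, and in one of them yours is actually tighter. The paper routes the second variation through the adjoint variable $p$ and tests it only on the individual modes $v_n=\sin(n\pi x)$, $1\le n\le q$, inferring the index bound from $D^2J(0)(v_n,v_n)<0$; since the form is not diagonal in that basis (the overlaps $\langle \sin \pi x,\sin n\pi x\sin m\pi x\rangle$ do not all vanish and also scale with the $z_i$), negativity of diagonal entries alone does not produce a $q$-dimensional negative-definite subspace. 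You instead work on $\mathrm{span}\{e_2,\dots,e_{q+1}\}$, where the observation term vanishes identically, show the remaining data term is a positive-definite quadratic form there (positivity of $e_1$ on $(0,1)$ plus injectivity of the heat semigroup), and use compactness of the unit sphere in this finite-dimensional subspace to get the uniform bound needed to beat $\sigma^{-2}\|v_x\|^2$; this genuinely yields Morse index at least $q$. Your expansion via $y_2=\omega$ is equivalent to the paper's adjoint computation (it is precisely the duality identity behind (\ref{eqn:2var})), so the regularity caveat you flag is discharged by citing that formula and the variational equations (\ref{eqn:linear})--(\ref{eqn:quad}).
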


Thus there are cases in which $J$ is not globally convex, and has at least two critical points (since it is already known to have a minimizer).


\section{The variational framework}
We start our investigation by deriving the Euler--Lagrange equation for the variational problem (\ref{eqn:cost}) in the space $V$. We also compute the second variation of the cost functional as it will be needed in proving the uniqueness theorems.

We first recall that $y$ denotes the unique solution to (\ref{eqn:evol}) with Dirichlet boundary conditions and $y(0) = u$. The variation of $y$ with respect to the initial value $u$, in the $v$-direction, is denoted $\eta := Dy(u)v$, and satisfies the initial value problem
\begin{align}
	\eta_t + \left[f'(y) \eta \right]_x = \eta_{xx} + r'(y) \eta \label{eqn:linear} \\
	\eta(0) = v. \nonumber
\end{align}
Similarly, the second variation of $y$ is denoted $\omega := D^2y(u)(v,v)$ and satisfies
\begin{align}
	\omega_t + \left[f'(y) \omega + f''(y) \eta^2 \right]_x = \omega_{xx} + r'(y) \omega + r''(y) \eta^2 \label{eqn:quad} \\
	\omega(0) = 0. \nonumber
\end{align}
We observe that $\omega \equiv 0$ if $f''$ and $r''$ vanish, which happens precisely when the forward equation is linear.

We let $p$ denote the solution to the adjoint equation
\begin{align}
	-p_t - f'(y) p_x = p_{xx} + r'(y) p \label{eqn:adjoint} \\
	p(t_N) = 0 \nonumber
\end{align}
with a discontinuous jump
\begin{align}
	p(t_i^+) - p(t_i^-) = H^* R^{-1} \left( H y(t_i) - z_i \right)
	\label{eqn:jump}
\end{align}
prescribed at each observation time, $t_i$. The definition is such that
\begin{align}
	\left< p_t, \eta \right> + \left< p, \eta_t \right> = 0
	\label{eqn:peta}
\end{align}
for all $t \neq t_i$.

The first variation of the cost functional (\ref{eqn:cost}) can thus be written
\begin{align*}
	DJ(u)(v) = \sum_{i=1}^N \left< \eta(t_i), H^* R^{-1} (H y(t_i) - z_i) \right> + \frac{1}{\sigma^2} \left<v, u-u_0 \right>_V,
\end{align*}
and from the definition of $p$ we obtain
\begin{align*}
	\sum_{i=1}^N \left< \eta(t_i), H^* R^{-1} (H y(t_i) - z_i) \right> = &\sum_{i=1}^N \left<\eta(t_i), p(t_i^+) - p(t_i^-) \right> \\
	= &-\left<\eta(0), p(0) \right> \\ 
	&- \sum_{i=1}^N \left[ \left< \eta(t_i), p(t_i^-) \right> - \left< \eta(t_{i-1}), p(t_{i-1}^+) \right> \right]
\end{align*}
where we have set $t_0 = 0$. Then (\ref{eqn:peta}) implies
\begin{align*}
	\left< \eta(t_i), p(t_i^-) \right> - \left< \eta(t_{i-1}), p(t_{i-1}^+) \right> &= \int_{t_{i-1}}^{t_i} \frac{d}{dt} \left< p, \eta \right> dt \\
	&= 0
\end{align*}
for each $i$, hence
\begin{align*}
	DJ(u)v &= - \left< v, p(0) \right> + \sigma^{-2} \left<v, u - u_0 \right>_V.
\end{align*}
Integrating the first term by parts, we arrive at the following.

\begin{prop} The $V$-gradient of $J$ is given by
\begin{align}
	DJ(u) &= \Delta^{-1} p(0) + \sigma^{-2} (u - u_0).
	\label{eqn:EL}
\end{align}
\end{prop}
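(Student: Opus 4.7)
The plan is to simply recognize the statement as the conclusion of the derivation that has already been set up, packaged through the Riesz identification of $DJ(u)$ with an element of $V$. At this point in the paper we have already established the identity
\begin{align*}
	DJ(u)v = -\langle v, p(0)\rangle + \sigma^{-2}\langle v, u - u_0\rangle_V,
\end{align*}
valid for every $v \in V = H^1_0(0,1)$. The task is to write the right-hand side as $\langle v, \cdot\rangle_V$ for a single element of $V$, which is then by definition the $V$-gradient.

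The second summand already has the desired form, with representer $\sigma^{-2}(u - u_0) \in V$. For the first summand, I would let $g := \Delta^{-1} p(0)$, i.e., the unique solution in $H^2 \cap H^1_0$ of $g_{xx} = p(0)$ (well-defined because $p(0) \in L^2$ and the Dirichlet Laplacian on $(0,1)$ is invertible). Integrating by parts, and using $v(0) = v(1) = 0$ to kill the boundary term, gives
\begin{align*}
	-\langle v, p(0)\rangle = -\int_0^1 v\, g_{xx}\, dx = \int_0^1 v_x g_x\, dx = \langle v, g\rangle_V.
\end{align*}

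Combining the two pieces yields
\begin{align*}
	DJ(u)v = \bigl\langle v,\, \Delta^{-1} p(0) + \sigma^{-2}(u - u_0) \bigr\rangle_V
\end{align*}
for every $v \in V$, and since the $V$-gradient is by definition the unique element representing the derivative via the $V$-inner product, the stated formula follows. There is no real obstacle here; the only subtlety worth flagging is the consistent sign convention $\langle v, w\rangle_V = \langle v, -\Delta w\rangle$ on $H^1_0$ with Dirichlet conditions, which is what makes $\Delta^{-1} p(0)$ (rather than $-\Delta^{-1} p(0)$) the correct representer.
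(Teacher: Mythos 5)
Your proposal is correct and follows exactly the paper's route: the paper derives $DJ(u)v = -\langle v, p(0)\rangle + \sigma^{-2}\langle v, u-u_0\rangle_V$ and then simply says ``integrating the first term by parts, we arrive at the following,'' which is precisely the Riesz-representation computation you carry out with $g = \Delta^{-1}p(0)$. Your version just makes explicit the sign convention $\langle v, w\rangle_V = \langle v_x, w_x\rangle = -\langle v, w_{xx}\rangle$ that the paper leaves implicit.
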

To better understand this result, it is worth recalling that the solution to the adjoint equation depends on $y$ and hence on the initial condition, $u$. With this dependence explicitly written as $p[y(u)]$, the Euler--Lagrange equation can be viewed as a fixed-point equation for the map
\begin{align}
	u \mapsto u_0 - \sigma^2 \Delta^{-1} p[y(u)] (0)
	\label{map:EL}
\end{align}



Proceeding similarly for the second variation, we find
\begin{align}
	D^2J(u)(v,v) &= \sum_{i=1}^N \left[ \left< \omega(t_i), H^* R^{-1} (H y(t_i) - z_i) \right> + \|R^{-1/2} H \eta(t_i) \|^2 \right] + \frac{1}{\sigma^2} \| v  \|^2_V
\end{align}
and
\begin{align*}
	\sum_{i=1}^N \left< \omega(t_i), H^* R^{-1} (H y(t_i) - z_i) \right> =
	- \sum_{i=1}^N \left[ \left< \omega(t_i), p(t_i^-) \right> - \left< \omega(t_{i-1}), p(t_{i-1}^+) \right> \right]
\end{align*}
because $\omega(0) = 0$. Using (\ref{eqn:quad}) and (\ref{eqn:adjoint}) and integrating by parts, we find that
\begin{align*}
	\left< p_t, \omega \right> + \left< p, \omega_t \right> = \left< r''(y) \eta^2, p \right> + \left< f''(y) \eta^2, p_x \right>,
\end{align*}
with the following consequence.
\begin{prop} The Hessian of $J$ is given by
\begin{align}
	\label{eqn:2var}
	D^2J(u)(v,v) = &\int_0^{t_N} \left[ \left< r''(y) \eta^2, p \right> + \left< f''(y) \eta^2, p_x \right> \right] dt \\ &+ \sum_{i=1}^N \left|R^{-1/2} H \eta(t_i) \right|^2 + \frac{1}{\sigma^2} \| v  \|^2_V. \nonumber
\end{align}
\end{prop}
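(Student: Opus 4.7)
The plan is to construct $u \equiv 0$ as a critical point of $J$ by choosing $u_0$ to satisfy the Euler--Lagrange equation (\ref{eqn:EL}), and then to exhibit a $q$-dimensional subspace of $V$ on which the Hessian (\ref{eqn:2var}) is strictly negative. The key observation is that $y \equiv 0$ solves the reaction--diffusion equation (since $r(0) = 0$), so at $u = 0$ every linear object reduces to a heat-equation object: the variation $\eta$ satisfies $\eta_t = \eta_{xx}$ (using $r'(0) = 0$) and the adjoint $p$ satisfies $-p_t = p_{xx}$ between observation times. Moreover, because $H$ projects onto the first Fourier mode, every jump in (\ref{eqn:jump}) is a multiple of $\sin(\pi x)$, so $p(t,x) = a(t) \sin(\pi x)$ for a piecewise-exponential function $a(t)$ whose values at the $t_i^\pm$ are determined linearly by the $z_i$.

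Next I would choose $z_1 = \cdots = z_{N-1} = 0$ and leave $z_N$ as a free real parameter, so that $a(t)$ is a single exponential of constant sign on $[0, t_N]$, proportional to $z_N$. Setting $u_0 = \sigma^2 \Delta^{-1} p(0)$ then makes $u \equiv 0$ a critical point by (\ref{eqn:EL}); since $u_0$ does not appear in (\ref{eqn:2var}), this consistency condition is decoupled from the Morse-index analysis.

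For the Morse index I would restrict to the $q$-dimensional subspace $V_q := \operatorname{span}\{\sin(\pi k x) : 2 \leq k \leq q+1\} \subset V$. Since $v \in V_q$ has no first Fourier mode and the heat equation preserves Fourier modes, $\eta(t)$ has no $\sin(\pi x)$ component either, so $H\eta(t_i) = 0$ for every $i$ and (\ref{eqn:2var}) collapses on $V_q$ to
\begin{align*}
D^2J(0)(v, v) = r''(0) \int_0^{t_N} a(t) \int_0^1 \eta(t,x)^2 \sin(\pi x)\, dx\, dt + \frac{1}{\sigma^2} \|v\|_V^2.
\end{align*}
For $v \neq 0$ the inner integral is strictly positive, because $\sin(\pi x) > 0$ on $(0,1)$ and $\eta(t, \cdot)$ cannot vanish identically. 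Choosing the sign of $z_N$ opposite to that of $r''(0)$ then makes the first term a negative-definite quadratic form on the finite-dimensional $V_q$, so by continuity on the $V$-unit sphere it is bounded above by $-\epsilon \|v\|_V^2$ for some $\epsilon > 0$ that scales linearly with $|z_N|$. Taking $|z_N|$ large enough that $\epsilon > 1/\sigma^2$ gives $D^2J(0)(v, v) < 0$ throughout $V_q \setminus \{0\}$, so the Morse index at $u = 0$ is at least $q$.

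The main obstacle is the sign bookkeeping: tracing the jump condition (\ref{eqn:jump}), the backward-in-time evolution of $a$, and the unspecified sign of $r''(0)$ through to the guarantee $r''(0)\, a(t) < 0$. The argument really hinges on a resonance between $H$ and the adjoint: because $H^*$ maps into $\operatorname{span}\{\sin(\pi x)\}$ and this subspace is invariant under the backward heat flow, the whole adjoint $p$ lies in that one-dimensional subspace. This is simultaneously what kills the $|H\eta|^2$ contributions on $V_q$ (which is orthogonal to $\sin(\pi x)$) and what keeps the triple integral $\int \eta^2 \sin(\pi x)\, dx$ non-vanishing.
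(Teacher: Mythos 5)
Your proposal does not prove the statement at hand. The proposition to be established is the identity (\ref{eqn:2var}) for the second variation of $J$ at an \emph{arbitrary} initial condition $u$, for the general quasilinear equation (\ref{eqn:evol}). What you have written is instead an argument for Theorem \ref{thm:non} --- the construction, for a special reaction--diffusion equation, of data and a prior making $u\equiv 0$ a critical point of Morse index at least $q$ --- and that argument \emph{uses} the formula (\ref{eqn:2var}) as an input rather than deriving it. Nothing in your text explains where the integral $\int_0^{t_N}\left[\left<r''(y)\eta^2,p\right>+\left<f''(y)\eta^2,p_x\right>\right]dt$ comes from, which is the entire content of the proposition. (As a side remark, your variant of the Morse-index construction --- taking $z_1=\cdots=z_{N-1}=0$ and testing on modes $2\le k\le q+1$ so that the $|H\eta(t_i)|^2$ terms vanish identically --- is a reasonable alternative to the paper's choice of test directions $1\le n\le q$, but that is a comment on a different theorem.)

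The missing argument runs as follows. Differentiating the cost functional twice in the direction $v$ introduces the second linearization $\omega=D^2y(u)(v,v)$, which solves (\ref{eqn:quad}) with $\omega(0)=0$, and yields
\begin{align*}
D^2J(u)(v,v)=\sum_{i=1}^N\left[\left<\omega(t_i),H^*R^{-1}(Hy(t_i)-z_i)\right>+\left|R^{-1/2}H\eta(t_i)\right|^2\right]+\frac{1}{\sigma^2}\|v\|_V^2 .
\end{align*}
The first sum is then rewritten using the jump condition (\ref{eqn:jump}) as a telescoping sum of the quantities $\left<\omega(t_i),p(t_i^+)-p(t_i^-)\right>$; the boundary term at $t=0$ drops out precisely because $\omega(0)=0$ (this is the structural difference from the first-variation computation, where $\eta(0)=v$ produces the term $-\left<v,p(0)\right>$). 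On each interval $(t_{i-1},t_i)$ one computes $\frac{d}{dt}\left<p,\omega\right>=\left<p_t,\omega\right>+\left<p,\omega_t\right>$ by substituting (\ref{eqn:quad}) and (\ref{eqn:adjoint}) and integrating by parts: the terms involving $\omega_{xx}$, $r'(y)\omega$ and $f'(y)\omega$ cancel against their adjoint counterparts, leaving exactly $\left<r''(y)\eta^2,p\right>+\left<f''(y)\eta^2,p_x\right>$. Integrating over each interval and summing converts the telescoping sum into $\int_0^{t_N}\left[\left<r''(y)\eta^2,p\right>+\left<f''(y)\eta^2,p_x\right>\right]dt$, which gives (\ref{eqn:2var}). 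None of this duality computation appears in your proposal.
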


We observe that this term is positive definite if $f'' =r'' = 0$, but the Euler--Lagrange map given in (\ref{map:EL}) may fail to be a contraction in that case.


\section{Analytic preliminaries}
\label{sec:anal}
In this section we review some analysis for quasilinear evolution equations. We restrict our attention to equations that admit global solutions for all initial conditions, as this guarantees the cost functional is defined on all of $V$.

\begin{prop} Suppose $r$ and $f'$ are locally Lipschitz, and (\ref{growth}) is satisfied. Then for any initial condition $u \in H^1_0$, (\ref{eqn:evol}) has a unique classical solution on $[0,1] \times (0,\infty)$.
\label{prop:exist}
\end{prop}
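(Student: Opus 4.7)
The strategy is the standard one for quasilinear parabolic equations: first establish local well-posedness in $V = H^1_0$, then derive a priori $L^\infty$ and $H^1$ bounds using the maximum principle and the growth condition (\ref{growth}), and finally iterate local existence to obtain a global solution.

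For local existence I would rewrite (\ref{eqn:evol}) in the semilinear form $y_t - y_{xx} = r(y) - f'(y) y_x$ and treat the right-hand side via Duhamel's formula against the heat semigroup $e^{t\Delta}$ generated by the Dirichlet Laplacian on $L^2(0,1)$. Because $V \hookrightarrow L^\infty(0,1)$ and both $f'$ and $r$ are locally Lipschitz, the nonlinearity $y \mapsto r(y) - f'(y) y_x$ is locally Lipschitz from $V$ into $L^2$. A contraction mapping argument on $C([0,T];V)$ then produces a unique mild solution $y$ on some maximal interval $[0,T_{\max})$, satisfying the standard blow-up alternative that either $T_{\max} = \infty$ or $\|y(t)\|_V \to \infty$ as $t \uparrow T_{\max}$. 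Parabolic smoothing upgrades this mild solution to a classical solution on $(0,1) \times (0, T_{\max})$.

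To rule out finite-time blow-up, I would first obtain an $L^\infty$ bound via the maximum principle. Writing $f(y)_x = f'(y) y_x$, the equation takes the non-divergence form $y_t + f'(y) y_x - y_{xx} = r(y)$. Let $Y(t)$ solve the scalar ODE $Y' = |r(Y)|+1$ with $Y(0) = \|u\|_\infty$; condition (\ref{growth}) guarantees that $Y$ is defined for all $t \geq 0$. A comparison argument (note $Y_t + f'(y)Y_x - Y_{xx} = |r(Y)|+1 \geq r(Y)$) shows $y \leq Y$ and, symmetrically, a lower bound, so $\|y(\cdot,t)\|_\infty$ is bounded on every finite subinterval of $[0,T_{\max})$ by a constant depending only on $t$ and $\|u\|_\infty$. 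With this bound in hand, I would test the equation against $-y_{xx}$ and integrate by parts to obtain, after controlling the cross terms by AM--GM and using the $L^\infty$ bound to estimate $f'(y)$ and $r(y)$,
\begin{equation*}
\frac{1}{2}\frac{d}{dt}\|y_x\|^2 + \tfrac{1}{2}\|y_{xx}\|^2 \leq C(T)\|y_x\|^2 + C(T).
\end{equation*}
Gronwall's inequality then yields a uniform $H^1_0$ bound on $[0,T]$, contradicting the blow-up alternative unless $T_{\max} = \infty$.

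The main obstacle will be justifying the pointwise comparison above at the regularity of a mild solution. I would handle this either by approximating the initial datum by smooth functions, applying the classical maximum principle to the smooth solutions, and passing to the limit via the continuous dependence supplied by the contraction argument; or more directly by invoking parabolic smoothing to place $y$ in $C^{2,1}((0,1)\times (0,T_{\max}))$, where the pointwise comparison applies verbatim, and then recovering the $L^\infty$ bound at $t=0$ by continuity in $V \hookrightarrow L^\infty$.
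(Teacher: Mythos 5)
Your proposal is correct and follows essentially the same route as the paper: rewrite (\ref{eqn:evol}) in the semilinear form $y_t - y_{xx} = r(y) - f'(y)y_x$, note that the right-hand side is locally Lipschitz from $H^1_0$ into $L^2$, obtain a local solution, bootstrap to classical regularity, and prevent blow-up via an $L^\infty$ bound obtained by comparison with a spatially constant super/subsolution solving an ODE whose global existence is exactly what condition (\ref{growth}) guarantees --- this last step is precisely Lemma \ref{yunif}. The only substantive difference is in how the continuation argument is closed: the paper cites Theorem 3.3.3 and Corollary 3.3.5 of \cite{H81} for the local theory and the blow-up alternative, leaving implicit the passage from the $L^\infty$ bound to control of the $H^1_0$ norm, whereas you make that passage explicit by testing against $-y_{xx}$ and applying Gronwall. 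Your version is more self-contained and records the $H^1$ growth rate explicitly; the paper's version is shorter because Henry's continuation results absorb the fact that $\|r(y) - f'(y)y_x\|_{L^2}$ grows only linearly in $\|y\|_{H^1}$ once $\|y\|_\infty$ is controlled. Your closing remark about justifying the pointwise comparison at mild-solution regularity is handled in the paper exactly as in your second suggested fix: parabolic smoothing places $y(t)$ in $C^{2+\delta}$ for $t>0$, so the classical maximum principle (the paper cites \cite{K63}) applies directly.
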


This result is a direct consequence of the material in Chapter 3 of \cite{H81}; for the sake of completeness we verify some of the necessary details here.

\begin{proof} The local existence and uniqueness follows from Theorem 3.3.3 of \cite{H81}, because the map $y \mapsto r(y) - f'(y) y_x$ is locally Lipschitz from $H^1_0$ to $L^2$. This yields a solution $y \in C\left( [0,T); L^2 \right)$, with $y(t) \in H^1_0 \cap H^2$ for all $t \in (0,T)$. Moreover, from Theorem 3.5.2 of \cite{H81}, the map $t \mapsto y_t \in H^1_0$ is locally H\"{o}lder continuous. In particular this implies $y$ and $y_t$ are continuous in both $x$ and $t$. We also know that $y \in H^2$ so by the Sobolev embedding theorem $y_x \in H^1$ is H\"{o}lder continuous. We then have for each fixed $t$ that
\begin{align*}
	y_{xx} = y_t + f(y)_x - r(y)
\end{align*}
is in $C^{\delta}[0,1]$ for some $\delta > 0$, hence by elliptic regularity $y(t) \in C^{2+\delta}[0,1]$. Thus $y$ is a classical solution of (\ref{eqn:evol}).

The long-time existence claim follows from Corollary 3.3.5 of \cite{H81} together with the following pointwise bound from Lemma \ref{yunif}.

\end{proof}

We now gather some estimates on $y$, $\eta$ and $p$ that will be needed in proving the uniqueness theorems. The first of these shows that $y$ is uniformly bounded on any finite time interval.

\begin{lemma} Suppose $y(t)$ solves (\ref{eqn:evol}) for $t \in (0,T)$, with $\|y(0)\|_{\infty} \leq A$. Then
\begin{align}
	\|y(t)\|_{\infty} \leq B
\end{align}
for any $t<T$, where $B$ depends on $A$ and $T$.
\label{yunif}
\end{lemma}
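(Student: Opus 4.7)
The plan is to bound $y$ above and below by solutions of scalar ODEs, invoking the parabolic maximum principle. Condition (\ref{growth}) is precisely what is needed to guarantee global existence of the comparison ODEs.

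First I would introduce auxiliary functions $Z(t)$ and $W(t)$ defined by
\begin{align*}
    \dot Z &= |r(Z)| + 1, \qquad Z(0) = A, \\
    \dot W &= -|r(W)| - 1, \qquad W(0) = -A.
\end{align*}
Separation of variables combined with the divergence of the integrals in (\ref{growth}) at $\pm\infty$ forces $Z$ and $W$ to exist on all of $[0,\infty)$; indeed $\int_A^{Z(T)} dy/(|r(y)|+1) = T$ bounds $Z(T)$ away from $+\infty$, and similarly for $W$. I would then set $B := \max\{Z(T),\, -W(T)\}$, which depends only on $A$, $T$, and $r$.

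The next step is to prove the pointwise sandwich $W(t) \leq y(x,t) \leq Z(t)$ for all $(x,t) \in [0,1] \times [0,T)$. For the upper bound I would set $v(x,t) = y(x,t) - Z(t)$ and, after the standard regularization $v - \varepsilon t$ if needed, consider the first time $t^*$ at which $\max_x v(\cdot, t^*) = 0$, attained at some $x^*$. Because $y$ vanishes on $\partial[0,1]$ while $Z(t) \geq A \geq 0$, the point $x^*$ must lie in the open interval (the boundary case is easily handled separately when $A = 0$). Thus $y_x(x^*, t^*) = 0$ and $y_{xx}(x^*, t^*) \leq 0$, while $v_t(x^*, t^*) \geq 0$ gives $y_t(x^*, t^*) \geq \dot Z(t^*)$. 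Substituting into (\ref{eqn:evol}) yields
\begin{align*}
    \dot Z(t^*) \leq y_{xx} - f'(y)\, y_x + r(y) \leq r(Z(t^*)),
\end{align*}
which contradicts $\dot Z(t^*) = |r(Z(t^*))| + 1$. The lower bound $y \geq W$ is symmetric, and together they give $\|y(t)\|_\infty \leq B$.

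The main obstacle is the quasilinear flux $f(y)_x = f'(y)\, y_x$: it threatens to disrupt the comparison, and is controlled only because $y_x$ vanishes at an interior spatial extremum. That in turn requires enough regularity to apply the pointwise maximum principle, which is exactly what Proposition \ref{prop:exist} supplies. Beyond this, the only subtlety is the standard one of reducing to a strict first-touching point, which the $\varepsilon t$ device handles routinely.
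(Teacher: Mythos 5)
Your proof is correct and follows essentially the same route as the paper: both arguments sandwich $y$ between solutions of spatially constant comparison ODEs whose global existence is exactly what condition (\ref{growth}) guarantees. The only difference is cosmetic---the paper cites the parabolic maximum principle of Kaplan directly for the bound $\psi_-(t) \leq y(x,t) \leq \psi_+(t)$, whereas you prove the comparison by hand via a first-touching-point argument, using the extra $+1$ in $\dot Z = |r(Z)|+1$ as a strictness device.
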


\begin{proof}
Let $\psi_+$ and $\psi_-$ solve the initial value problems
\begin{align*}
	\psi_+' = |r(\psi^+)|, &\ \psi_+(0) = A \\
	\psi_-' = -|r(\psi^-)|, &\ \psi_-(0) = -A.
\end{align*}
Then the parabolic maximum principle (\textit{cf.} Theorem 1 of \cite{K63}) guarantees that
\begin{align*}
	\psi_-(t) \leq y(x,t) \leq \psi_+(t).
\end{align*}
From (\ref{growth}) we know that $\psi_{\pm}$ exist and are continuous for all $t \geq 0$, so we complete the proof by setting
\begin{align*}
	B := \max_{0 \leq t \leq T} \max \left\{ -\psi_-(t), \psi_+(t) \right\}
\end{align*}
\end{proof}

Since $f$ and $r$ are of class $C^2$, for any given value of $A$ and $k \in \{0,1,2\}$ the quantities
\begin{align}
	R_k(A) &:= \sup_{|y| \leq B} |r^{(k)}(y)| \\
	F_k(A) &:= \sup_{|y| \leq B} |f^{(k)}(y)|
\end{align}
are well defined, with $B$ as in the proof of Lemma \ref{yunif}.

We next derive an estimate on the $L^4$ norm of $\eta$. To simplify notation, we observe that $\| \eta \|_{L^4(0,1)} = \| \eta^2 \|^{1/2}$.
\begin{lemma} Suppose $\eta(t)$ solves (\ref{eqn:linear}) for $t \in (0,T)$, and $\|y(0)\|_{\infty} \leq A$. Then
\begin{align}
	\| \eta^2(t) \| \leq \| v^2 \| e^{\alpha t}
\end{align}
for any $t <  T$, where $\alpha$ depends on $A$ and $T$.
\label{etaL4}
\end{lemma}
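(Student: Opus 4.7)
The plan is to derive a differential inequality for $\|\eta^2\|^2 = \int_0^1 \eta^4\,dx$ and then apply Gronwall. Differentiating under the integral and using (\ref{eqn:linear}) gives
\begin{align*}
\tfrac{d}{dt}\!\int_0^1 \eta^4\,dx \;=\; 4\!\int_0^1 \eta^3\bigl(\eta_{xx} + r'(y)\eta - [f'(y)\eta]_x\bigr)\,dx.
\end{align*}
The diffusion term, after integration by parts (using the Dirichlet condition on $\eta$, which passes to $\eta^3$), contributes $-12\int \eta^2\eta_x^2\,dx$; this is a good negative term that I will use as a ``sink'' to absorb the flux term.

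Next I would handle the reaction contribution $4\int r'(y)\eta^4\,dx$ by noting that Lemma \ref{yunif} gives $\|y(t)\|_\infty \leq B(A,T)$, so $|r'(y)| \leq R_1(A)$ pointwise. This bounds the reaction contribution by $4R_1(A)\int \eta^4\,dx$. For the flux term I would again integrate by parts:
\begin{align*}
-4\!\int_0^1 \eta^3[f'(y)\eta]_x\,dx \;=\; 12\!\int_0^1 f'(y)\,\eta^3\eta_x\,dx,
\end{align*}
so that the factor of $y_x$ never appears explicitly. Using $|f'(y)| \leq F_1(A)$ and the AM--GM inequality $|\eta^3\eta_x| \leq \tfrac{\varepsilon}{2}\eta^2\eta_x^2 + \tfrac{1}{2\varepsilon}\eta^4$, the flux contribution is bounded by $6F_1(A)\varepsilon \int \eta^2\eta_x^2\,dx + 6F_1(A)\varepsilon^{-1}\int \eta^4\,dx$. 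Choosing $\varepsilon$ small enough (for example $\varepsilon = 1/F_1(A)$ when $F_1(A) > 0$) lets the first piece be absorbed by the $-12\int \eta^2\eta_x^2\,dx$ term coming from diffusion.

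What remains is an inequality of the form
\begin{align*}
\tfrac{d}{dt}\!\int_0^1 \eta^4\,dx \;\leq\; \alpha \!\int_0^1 \eta^4\,dx,
\end{align*}
with $\alpha$ depending only on $R_1(A)$ and $F_1(A)$, hence on $A$ and $T$ via Lemma \ref{yunif}. Gronwall's inequality gives $\int \eta^4(t)\,dx \leq e^{\alpha t}\int v^4\,dx$, and taking square roots (and renaming $\alpha$) yields the stated bound on $\|\eta^2(t)\|$. The main technical hurdle is the flux term, because differentiating $f'(y)$ would bring in $y_x$, which is only controlled in $L^2$; the integration-by-parts trick above avoids this by transferring the derivative onto $\eta^3$ and letting the dissipative term pay for the resulting gradient.
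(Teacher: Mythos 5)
Your proposal is correct and follows essentially the same route as the paper: differentiate $\int_0^1\eta^4\,dx$, integrate by parts so that the flux term becomes $12\int f'(y)\eta^3\eta_x\,dx$ (never producing $y_x$), absorb it into the dissipative term $-12\int\eta^2\eta_x^2\,dx$ via AM--GM with a suitably chosen weight, bound $r'(y)$ and $f'(y)$ using Lemma \ref{yunif}, and conclude with Gronwall. The only difference is in the bookkeeping of constants.
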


\begin{proof} We differentiate and then integrate by parts to obtain
\begin{align*}
	\frac{1}{4} \frac{d}{dt} \int_0^1 \eta ^4 dx &= \int_0^1 \eta^3 \left( \eta_{xx} - [ f'(y) \eta]_x + r'(y) \eta \right) dx \\
	&= \int_0^1 \left( -3 \eta^2 \eta_x^2 + 3 f'(y) \eta^3 \eta_x + r'(y) \eta^4 \right) dx
\end{align*}
Then by the AM--GM inequality
\begin{align*}
	\left| \eta^3 \eta_x \right| &\leq \frac{F_1(A)}{4} \eta^4 + \frac{1}{F_1(A)} \eta^2 \eta_x^2
\end{align*}
so we find that
\begin{align*}
	\frac{d}{dt} \int_0^1 \eta ^4 dx & \leq \left(4 R_1(A) + 3F_1(A)^2 \right) \int_0^1 \eta^4 dx.
\end{align*}
The result follows from Gronwall's inequality.
\end{proof}

We finally turn to the adjoint equation. Invoking linearity, the solution can be expressed as $p = p_1 + \cdots p_N$, where $p_i$ satisfies (\ref{eqn:adjoint}) with terminal condition $p(t_N) = 0$ and jump
\begin{align}
	p(t_i+) - p(t_i-) = H^* R^{-1} (H y(t_i) - z_i).
\end{align}
Therefore it suffices to bound each $p_i$ individually, then sum the resulting estimates.

\begin{lemma} Suppose $p(t)$ solves (\ref{eqn:adjoint}), and $\|y(0)\|_{\infty} \leq A$. Then
\begin{align}
	\| p(t) \| \leq C e^{\beta (t_N-t)}
	\label{pbound}
\end{align}
for any $t  \leq t_N$, and
\begin{align}
	\int_0^{t_N} \|  p_x(t) \| dt \leq C \sqrt{t_N} e^{2\beta t_N},
	\label{pxbound}
\end{align}
where $\beta$ depends on $A$ and $t_N$, and $C$ depends on $N$, $D$, $A$ and $t_N$.
\label{pL2}
\end{lemma}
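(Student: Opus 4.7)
The plan is to exploit the linearity of the adjoint equation to reduce the problem to a single jump, then use a standard parabolic energy estimate, run backwards in time, to obtain both bounds simultaneously from a single Gronwall-type argument.

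First I would fix $i$ and consider the component $p_i$, which vanishes for $t > t_i$ and, on $[0,t_i]$, solves (\ref{eqn:adjoint}) with terminal value $p_i(t_i^-) = -H^*R^{-1}(Hy(t_i) - z_i)$. Since $|z_i|\leq D$ and $\|y(t_i)\| \leq \|y(t_i)\|_\infty \leq B$ by Lemma \ref{yunif}, the operator bound on $H$ and $H^*$ gives $\|p_i(t_i^-)\| \leq C_0$ with $C_0$ depending only on $A$, $t_N$ and $D$.

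Next I would reverse time by setting $\tilde p(\tau) = p_i(t_i - \tau)$, which converts (\ref{eqn:adjoint}) to the forward equation $\tilde p_\tau = \tilde p_{xx} + f'(y)\tilde p_x + r'(y)\tilde p$ on $\tau \in [0,t_i]$. Multiplying by $\tilde p$, integrating over $[0,1]$, using the Dirichlet boundary condition for an integration by parts on the diffusion term, and applying AM--GM as $F_1(A)\|\tilde p\|\|\tilde p_x\| \leq \tfrac{1}{2}F_1(A)^2\|\tilde p\|^2 + \tfrac{1}{2}\|\tilde p_x\|^2$ to absorb half of the $\|\tilde p_x\|^2$ that comes from the diffusion, I get the differential inequality
\begin{equation*}
	\frac{d}{d\tau}\|\tilde p\|^2 + \|\tilde p_x\|^2 \leq 2\beta \|\tilde p\|^2,
\end{equation*}
where $2\beta := F_1(A)^2 + 2R_1(A)$. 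Gronwall then delivers $\|p_i(t)\|\leq C_0 e^{\beta(t_i-t)} \leq C_0 e^{\beta(t_N-t)}$ on $[0,t_i]$, and summing over $i$ (noting $p_i\equiv 0$ when $t>t_i$) yields (\ref{pbound}) with $C = NC_0$.

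For (\ref{pxbound}), I would integrate the same differential inequality in $\tau$, substitute the exponential bound on $\|\tilde p\|^2$ into the right-hand side, and obtain $\int_0^{t_i}\|p_{i,x}(s)\|^2\,ds \leq C_0^2(1 + e^{2\beta t_i}) \lesssim C_0^2 e^{2\beta t_N}$. Cauchy--Schwarz in time then converts this $L^2$ bound into the desired $L^1$ bound, producing the factor $\sqrt{t_N}$, and summing over $i$ gives (\ref{pxbound}) after possibly enlarging the constant $C$. The main technical point is really just the choice of AM--GM parameter in the energy estimate: one must extract \emph{exactly} enough $\|\tilde p_x\|^2$ from the $f'(y)\tilde p_x$ cross-term to leave a strictly positive residue of $\|\tilde p_x\|^2$ on the left-hand side, since this residue is precisely what feeds the $L^2$-in-time bound used for the second inequality; once that balance is struck the remainder is routine.
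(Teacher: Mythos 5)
Your proposal is correct and follows essentially the same route as the paper: decompose $p=\sum_i p_i$ by linearity, run a backward-in-time energy estimate with AM--GM and Gronwall for (\ref{pbound}), then integrate the retained $\|p_x\|^2$ term and apply Cauchy--Schwarz in time for (\ref{pxbound}). The only (harmless) difference is that you use a single AM--GM split retaining the $\|p_x\|^2$ residue for both bounds, which yields a slightly larger but still admissible $\beta$, whereas the paper uses two separate splits.
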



\begin{proof} Differentiating and applying the AM--GM inequality, as in the proof of Lemma \ref{etaL4}, we have
\begin{align*}
	-\frac{1}{2} \frac{d}{dt} \| p \|^2 &= \left<p, p_{xx} + r'(y)p + f'(y) p_x \right> \\
	&\leq - \| p_x \|^2 + R_1(A) \|p\|^2 + F_1(A) \|p\| \|p_x\| \\
	&\leq \left(R_1(A) + \frac{F_1(A)^2}{4} \right) \|p\|^2
\end{align*}
and so an application of Gronwall's inequality to the function $\| p(t_i-t) \|^2$ yields
\begin{align*}
	\| p_i(t) \| \leq \left\| H^* R^{-1} (H y(t_i) - z_i) \right\| e^{\beta(t_i-t)}
\end{align*}
for any $t \leq t_i$, with $\beta = R_1(A) + F_1(A)^2/4$. We next recall that $y(t)$ is bounded uniformly (and hence in $L^2$), so
\begin{align*}
	\left\| H^* R^{-1} (H y(t_i) - z_i) \right\| \leq C',
\end{align*}
where $C'$ depends on $A$ and $t_N$ (through Lemma \ref{yunif}) and $D$. To complete the proof of (\ref{pbound}) we simply note that $p_i(t) = 0$ for $t > t_i$, then let $C = NC'$.

With a different choice of constants in the AM--GM inequality, we obtain
\begin{align*}
	-\frac{1}{2} \frac{d}{dt} \| p \|^2 \leq - \frac{1}{2} \| p_x \|^2 + \left(R_1(A) + \frac{F_1(A)^2}{2} \right) \|p\|^2
\end{align*}
and subsequently, letting $\gamma = 2R_1(A) + F_1(A)^2$,
\begin{align*}
	\| p_x \|^2 
	& \leq \frac{d}{dt} \left( e^{\gamma t} \|p\|^2 \right).
\end{align*}
Integrating, we find
\begin{align*}
	\int_0^{t_i} \| p_{ix}(t) \|^2 dt &\leq e^{\gamma t_i} \|p_i(t_i) \|^2 - \|p_i(0)\|^2 \\
	& \leq e^{\gamma t_i} C'^2
\end{align*}
for each $i$.
Now from the Cauchy--Schwarz inequality,
\begin{align*}
	\int_0^{t_N} \|p_x(t)\| dt 
	&\leq \sum_{i=1}^N \left( t_i \int_0^{t_i} \|p_{xi}(t)\|^2 dt  \right)^{1/2} \\
	&\leq N C' \sqrt{t_N} e^{\beta t_N},
\end{align*}
where we have used the fact that $\gamma \leq 4 \beta$.
\end{proof}


\section{The Bayesian formulation}
\label{sec:bayes}
Before proving Theorem \ref{thm:bayes} we elaborate on the meaning of the Gaussian prior measure $\mu_0 =  \mathcal{N} \left(u_0, -\sigma^2 \Delta^{-1} \right)$, following throughout the presentation of \cite{S10}.

We first note that the covariance operator $\mathcal{C} = -\sigma^2 \Delta^{-1}$ has eigenvalues $\gamma_n = (\sigma / n\pi)^2$, with normalized eigenfunctions $\phi_n(x) = \sqrt{2} \sin(n\pi x)$. Then we can express a random variable $u \sim \mu_0$ using the Karhunen--Lo\`{e}ve expansion:
\begin{align}
	u = u_0 + \sqrt{2} \sum_{n=1}^{\infty} \frac{\sigma \xi_n}{n \pi} \sin(n\pi x),
\end{align}
where $\{ \xi_n\}$ is an i.i.d. sequence of $ \mathcal{N}(0,1)$ random variables. This means the $n^{\textrm{th}}$ Fourier coefficient of $u$ is distributed according to $\mathcal{N}(a_n, (\sigma/n\pi)^2)$, where $a_n$ is the $n^{\textrm{th}}$ Fourier coefficient of the prior mean, $u_0$. It follows that
\begin{align*}
	\|u - u_0\|^2 &=  \sum_{n=1}^{\infty} \frac{\sigma^2 \xi_n^2}{(n \pi)^2}
\end{align*}
and so
\begin{align*}
	\mathbb{E} \|u - u_0\|^2 = \frac{\sigma^2}{6}.
\end{align*}
Thus $\sigma^2$ measures the expected value of $\|u-u_0\|^2$.


We now observe that Theorem \ref{thm:bayes} follows from Corollary 4.4 of \cite{S10}. To do so we must show that:
\begin{enumerate}
	\item[(i)] $L^2(0,1)$ has full measure under $\mu_0$;
	\item[(ii)] for every $\epsilon > 0$ there exists $M \in \mathbb{R}$ such that
		\begin{align*}
			\sum_{i=1}^N \left| R^{-1/2} H y(t_i) \right|^2 \leq \exp(\epsilon \|y(0)\|^2 + M)
		\end{align*}
	whenever $y(t)$ is a solution to (\ref{eqn:evol});
	\item[(iii)] for every $\rho > 0$ there exists $L \in \mathbb{R}$ such that
		\begin{align*}
			\sum_{i=1}^N \left| R^{-1/2} H \left(y_1(t_i) - y_2(t_i) \right) \right|^2 \leq L \|y_1(0) - y_2(0)\|^2
		\end{align*}
	whenever $y_1(t)$ and $y_2(t)$ satisfy (\ref{eqn:evol}) with $\max\{ \|y_1(0)\|, \|y_2(0)\| \} < \rho$.
\end{enumerate}

To establish (i) we use Lemma 6.25 of \cite{S10}, which says that any function $u \sim \mu_0$ is almost surely $\alpha$-H\"{o}lder continuous for any $\alpha < 1/2$. In particular, this implies $u$ is almost surely contained in $L^2(0,1)$, hence $\mu_0[L^2(0,1)] = 1$.

Since the observation operator $H$ is bounded on $L^2$, (ii) and (iii) will follow from Lemmas \ref{lem:yL2} and \ref{lem:ylip} below.

\begin{lemma} Suppose $r(y)$ is uniformly Lipschitz. Then there exist positive constants $a$ and $b$ so that
\begin{align}
	\|y(t)\|^2 \leq e^{at} \left[ \|y(0)\|^2 + bt \right]
	\label{eqn:yL2}
\end{align}
and
\begin{align}
	2\int_0^t \|y_x(s)\|^2 ds \leq \|y(0)\|^2 - \|y(t)\|^2 + a \int_0^t \|y(s)\|^2 ds + bt
	\label{eqn:yx}
\end{align}
for all $t \geq 0$ and any solution $y(t)$ to (\ref{eqn:evol}).
\label{lem:yL2}
\end{lemma}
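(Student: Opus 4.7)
The plan is to derive a single differential inequality for $\|y(t)\|^2$ by testing equation (\ref{eqn:evol}) against $y$ in $L^2(0,1)$, and then read off both (\ref{eqn:yL2}) and (\ref{eqn:yx}) from it. Since Proposition \ref{prop:exist} provides classical regularity, the manipulations below are all justified for $t > 0$.

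First I would compute, using integration by parts,
\[
\tfrac{1}{2}\tfrac{d}{dt}\|y\|^2 = \langle y, y_{xx}\rangle - \langle y, f(y)_x\rangle + \langle y, r(y)\rangle.
\]
The first term equals $-\|y_x\|^2$ by Dirichlet boundary conditions. For the flux term, let $G$ be any antiderivative of $s \mapsto s f'(s)$; then $y\, f(y)_x = y f'(y) y_x = [G(y)]_x$, so integrating across $[0,1]$ and using $y(0,t) = y(1,t) = 0$ shows $\langle y, f(y)_x\rangle = 0$. For the reaction term, the uniform Lipschitz hypothesis gives $|r(y)| \leq |r(0)| + L|y|$, hence by Cauchy--Schwarz and AM--GM
\[
\langle y, r(y)\rangle \leq |r(0)|\,\|y\| + L\|y\|^2 \leq \bigl(L + \tfrac{1}{2}\bigr)\|y\|^2 + \tfrac{1}{2}|r(0)|^2.
\]
Combining these estimates yields the key inequality
\[
\tfrac{d}{dt}\|y\|^2 + 2\|y_x\|^2 \leq a \|y\|^2 + b,
\]
with $a := 2L + 1$ and $b := |r(0)|^2$.

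From here both conclusions are immediate. Dropping the nonnegative $2\|y_x\|^2$ term and multiplying by $e^{-at}$ turns the left side into $\tfrac{d}{dt}(e^{-at}\|y\|^2)$, which is bounded above by $b e^{-at} \leq b$; integrating from $0$ to $t$ and rearranging gives (\ref{eqn:yL2}). Alternatively, integrating the differential inequality itself over $[0,t]$ without discarding the $2\|y_x\|^2$ term gives (\ref{eqn:yx}) directly.

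There is no real obstacle here; the only point requiring any thought is the vanishing of the conservation term, which must be handled via the antiderivative trick rather than a naive integration by parts, since $f$ need not be a polynomial in $y$. The uniform Lipschitz assumption on $r$ is precisely what is needed to prevent the reaction term from producing a superlinear contribution that Gronwall cannot absorb.
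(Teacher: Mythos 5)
Your proof is correct and follows essentially the same route as the paper: the same energy estimate obtained by testing against $y$, the same antiderivative trick to kill the flux term, the same Lipschitz bound on the reaction term, and Gronwall plus direct integration to obtain the two conclusions. The only difference is that you carry out the Gronwall step explicitly with the integrating factor $e^{-at}$, which the paper leaves implicit.
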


\begin{proof} 
Differentiating, we have
\begin{align*}
	\frac{1}{2} \frac{d}{dt} \| y \|^2 &= \left< y, y_{xx} + r(y) - f(y)_x  \right>.
\end{align*}
Letting $g(y)$ be an antiderivative of $yf'(y)$, we find that
\begin{align*}
	\left<y, f(y)_x \right> 
	&= \int_0^1 g(y)_x dx
\end{align*}
vanishes by the fundamental theorem of calculus, so
\begin{align*}
	\frac{d}{dt} \| y \|^2 &\leq -\|y_x\|^2 + 2\left< y, r(y) \right>.
\end{align*}
It follows immediately from the Lipschitz condition that $|yr(y)| \leq K |y|^2 + |r(0)| |y|$, which implies $2 |y r(y)| \leq a|y|^2 + b$ for some $a$ and $b$. Then (\ref{eqn:yL2}) is a consequence of Gronwall's inequality, and (\ref{eqn:yx}) is obtained by integrating from $0$ to $t$.
\end{proof}

To see how this implies (ii), we first observe that it suffices to prove
\begin{align*}
	\| y(t) \|^2 \leq \exp(\epsilon \|u\|^2 + M)
\end{align*}
for any $t \leq t_N$. Thus fixing $\epsilon > 0$ and defining
\begin{align*}
	e^M = \max\{ b t_N e^{a t_N}, \epsilon^{-1} e^{a t_N} \}
\end{align*}
we have from Lemma \ref{lem:yL2} that
\begin{align*}
	\|y(t)\|^2 &\leq e^M \left(1 + \epsilon \|u\|^2 \right) \\
	& \leq e^M e^{\epsilon \|u\|^2}
\end{align*}
as required.

\begin{lemma}  Suppose $r(y)$ and $f'(y)$ are uniformly Lipschitz. Then for any $\rho > 0$ there exists a positive constant $L$ so that
\begin{align*}
	\|y_1(t) - y_2(t)\|^2 \leq L \|u_1 - u_2\|^2
\end{align*}
for all $t \leq t_N$, provided $y_1(t)$ and $y_2(t)$ satisfy (\ref{eqn:evol}) with $\max \{ \|y_1(0)\|, \|y_2(0)\| \} < \rho$.
\label{lem:ylip}
\end{lemma}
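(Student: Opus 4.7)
The plan is to let $w = y_1 - y_2$ and derive a Gronwall-type estimate on $\|w\|^2$, where the coefficient in the exponent is controlled (after integration in time) by the energy inequality \eqref{eqn:yx} from Lemma \ref{lem:yL2}. Since $y_1,y_2$ both satisfy Dirichlet boundary conditions, so does $w$, and subtracting the evolution equations yields
\begin{align*}
	w_t + [f(y_1) - f(y_2)]_x = w_{xx} + r(y_1) - r(y_2), \qquad w(0) = u_1 - u_2.
\end{align*}
Differentiating $\|w\|^2$ and integrating by parts (using the Dirichlet boundary condition on $w$), one obtains
\begin{align*}
	\tfrac{1}{2} \tfrac{d}{dt} \|w\|^2 = -\|w_x\|^2 + \langle w_x, f(y_1) - f(y_2) \rangle + \langle w, r(y_1) - r(y_2) \rangle.
\end{align*}

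The next step is to estimate the two nonlinear terms pointwise. Since $r$ is uniformly Lipschitz with some constant $K_r$, the reaction term is bounded by $K_r \|w\|^2$. Since $f'$ is uniformly Lipschitz with some constant $K_f$, the mean value theorem gives $|f(y_1) - f(y_2)| \le \bigl(|f'(0)| + K_f(|y_1| + |y_2|)\bigr)|w|$, and then the AM--GM inequality yields
\begin{align*}
	|\langle w_x, f(y_1) - f(y_2)\rangle| \le \tfrac{1}{2}\|w_x\|^2 + C_1 \|w\|^2 + C_2 \bigl(\|y_1\|_\infty^2 + \|y_2\|_\infty^2\bigr) \|w\|^2
\end{align*}
for absolute constants $C_1, C_2$. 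After combining these estimates and absorbing the $\tfrac{1}{2}\|w_x\|^2$ into the $-\|w_x\|^2$ term (which we then discard), I end up with a differential inequality of the form
\begin{align*}
	\tfrac{d}{dt}\|w\|^2 \le \bigl[\,a_1 + a_2\bigl(\|y_1(t)\|_\infty^2 + \|y_2(t)\|_\infty^2\bigr)\bigr]\,\|w\|^2,
\end{align*}
to which Gronwall's inequality applies.

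The final task is to bound $\int_0^{t_N}\!\bigl(\|y_1\|_\infty^2 + \|y_2\|_\infty^2\bigr)\,dt$ in terms of $\rho$ alone. Since $y_i(t)\in H_0^1(0,1)$ for $t>0$, the embedding $\|y_i\|_\infty \le \|y_{ix}\|$ reduces this to controlling $\int_0^{t_N}\|y_{ix}\|^2\,dt$. This is precisely what \eqref{eqn:yx} of Lemma \ref{lem:yL2} provides: combined with \eqref{eqn:yL2}, it bounds this integral by a constant depending only on $\rho$ and $t_N$. Plugging this into the Gronwall estimate produces the desired constant $L = L(\rho,t_N)$ with $\|w(t)\|^2 \le L\|u_1 - u_2\|^2$ for all $t \le t_N$.

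The main obstacle is the flux term, because uniform Lipschitz continuity of $f'$ only yields linear (not bounded) growth of $f'$, so the naive approach produces a Gronwall coefficient involving $\|y_i\|_\infty$, and one has no pointwise-in-time bound on this quantity from the hypothesis $\|y_i(0)\|<\rho$. The resolution is to observe that one does not need a pointwise bound, only an integral-in-time one, which is exactly the content of \eqref{eqn:yx}.
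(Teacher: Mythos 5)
Your argument is correct and follows essentially the same route as the paper: an energy estimate on $w=y_1-y_2$, absorption of the $\|w_x\|^2$ term via AM--GM, a Gronwall coefficient of the form $a_1+a_2\bigl(\|y_{1x}\|^2+\|y_{2x}\|^2\bigr)$, and the integrated bound \eqref{eqn:yx} to control its time integral in terms of $\rho$ and $t_N$. The only (cosmetic) difference is that you integrate the flux term by parts onto $w$ and apply the mean value theorem to $f(y_1)-f(y_2)$, whereas the paper expands $f(y_1)_x-f(y_2)_x$ directly and bounds $\|f'(y_1)\|_\infty$ and $\|y_1-y_2\|_\infty$ by the corresponding $H^1_0$ norms; both yield the same Gronwall coefficient.
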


\begin{proof} For convenience we let $K_r$ and $K_f$ denote the Lipschitz constants of $r$ and $f'$, respectively. Differentiating and then integrating by parts as in the proof of Lemma \ref{lem:yL2}, we have
\begin{align*}
	\frac{1}{2} \frac{d}{dt} \| y_1 - y_2 \|^2 &= \left< y_1 - y_2, (y_1-y_2)_{xx} + r(y_1) - r(y_2) - f(y_1)_x + f(y_2)_x  \right> \\
	&\leq -\|(y_1-y_2)_x\|^2 + K_r \|y_1 - y_2\|^2 +  \left< y_1 - y_2, f(y_2)_x - f(y_1)_x  \right>.
\end{align*}
For the final term we write
\begin{align*}
	f(y_1)_x - f(y_2)_x = f'(y_1) (y_1 - y_2)_x + \left[f'(y_1) - f'(y_2) \right] y_{2x}.
\end{align*}
and thus obtain
\begin{align*}
	\left| \left< y_1 - y_2, f(y_2)_x - f(y_1)_x  \right> \right|  \leq &\left[ K_f \left( \| y_{1x} \| + \|y_{2x} \| \right) + |f'(0)| \right] \|y_1 - y_2\| \| (y_1 - y_2)_x\|.
\end{align*}
Then after an application of the AM--GM inequality, we find that
\begin{align*}
	\frac{1}{2} \frac{d}{dt} \| y_1 - y_2 \|^2 &\leq \left( K_r + \frac{1}{4} \left[ K_f \left( \| y_{1x} \| + \|y_{2x} \| \right) + |f'(0)| \right]^2 \right) \|y_1 - y_2\|^2 \\
	&= \alpha(t) \|y_1 - y_2\|^2,
\end{align*}
where we have defined $\alpha(t)$ to be the term in parentheses on the right-hand side. From (\ref{eqn:yx}) we know that $\int_0^t \alpha(s) ds$ is bounded above by a constant depending only on $\|y_1(0)\|$, $\|y_2(0)\|$ and $t_N$, so the result follows from Gronwall's inequality.
\end{proof}

\section{The uniqueness theorems}
\label{sec:unproofs}
Our main tool for proving Theorems \ref{thm:shorttime} and \ref{thm:alltime} will be the second variation formula (\ref{eqn:2var}) together with the following \textit{a priori} estimate for minimizers of $J$.

\begin{lemma} Let $u^*$ achieve of the infimum of the cost functional (\ref{eqn:cost}). Then
\begin{align}
	\|u^*\|_V \leq A,
\end{align}
where $A$ depends on $N$, $D$, $t_N$, $\|u_0\|$ and $\sigma$.
\label{lem:apriori}
\end{lemma}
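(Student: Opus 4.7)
The plan is to use the elementary fact that at a minimizer, the cost cannot exceed its value at any test point, and to use $u = u_0$ as the test point. This converts the minimizer property into an upper bound on the regularization term $\frac{1}{2\sigma^2}\|u^* - u_0\|_V^2$, from which a triangle inequality gives the desired bound.

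Concretely, I would proceed in four steps. First, since $u^*$ is a minimizer, $J(u^*) \leq J(u_0)$. Second, the observational sum in the cost functional is non-negative, so
\begin{align*}
    \frac{1}{2\sigma^2} \|u^* - u_0\|_V^2 \;\leq\; J(u^*) \;\leq\; J(u_0) \;=\; \tfrac{1}{2} \sum_{i=1}^N \left| R^{-1/2} \left(H y_0(t_i) - z_i\right) \right|^2,
\end{align*}
where $y_0$ denotes the solution to (\ref{eqn:evol}) with initial condition $u_0$. Third, I would bound $J(u_0)$ from above: using $|a - b|^2 \leq 2|a|^2 + 2|b|^2$ together with the data bound $|z_i| \leq D$ and the boundedness of $R^{-1/2}H$ on $L^2$, the $i$-th observation term is controlled by a constant multiple of $\|y_0(t_i)\|^2 + D^2$. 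To bound $\|y_0(t_i)\|$, I would invoke Lemma \ref{lem:yL2} (or, if one prefers to avoid the Lipschitz assumption, Lemma \ref{yunif} via the Sobolev embedding $\|u_0\|_\infty \leq \|u_0\|_V$) to get an estimate depending only on $\|u_0\|$ and $t_N$. This yields $J(u_0) \leq K$ for some constant $K$ depending on $N$, $D$, $t_N$, and $\|u_0\|$.

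Finally, the triangle inequality gives
\begin{align*}
    \|u^*\|_V \;\leq\; \|u_0\|_V + \|u^* - u_0\|_V \;\leq\; \|u_0\|_V + \sigma \sqrt{2 J(u_0)} \;\leq\; \|u_0\|_V + \sigma\sqrt{2K},
\end{align*}
which is a bound of the required form. There is no real obstacle here; the argument is a one-shot comparison estimate and the only subtlety is making sure that the $L^2$ (or $L^\infty$) bound on $y_0(t_i)$ is packaged in terms of norms of $u_0$ that are already accounted for in the lemma's dependence list. Note that the argument did not use any convexity or regularity of $J$, only that $u^*$ achieves the infimum, so the same bound in fact holds for any critical point at which $J$ is no larger than $J(u_0)$ — which is relevant when applying the lemma in the uniqueness proofs.
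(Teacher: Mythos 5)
Your proof is correct and follows essentially the same one-shot comparison argument as the paper: bound $J(u^*)$ by the cost at a fixed test point, control that cost via the uniform solution bound of Lemma \ref{yunif} and the data bound $|z_i|\leq D$, and extract the $V$-norm bound from the regularization term. The only difference is the choice of test point ($u_0$ rather than the paper's $0$), which changes nothing substantive --- and your explicit use of the triangle inequality to pass from $\|u^*-u_0\|_V$ to $\|u^*\|_V$ is if anything slightly cleaner than the paper's compressed display.
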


It is clear from the proof that $A$ can be assumed to be nondecreasing with respect to $\sigma$.

\begin{proof}
Since $u^*$ is a minimizer it satisfies $J(u^*) \leq J(0)$. Letting $y(t)$ solve (\ref{eqn:evol}) with $y(0) = 0$, Lemma \ref{yunif} implies that $y(t)$ is uniformly bounded for $t \leq t_N$. Therefore
\begin{align*}
	\| u^* \|_V^2 & \leq 2 \sigma^2 J(u^*) \\
	& \leq \sigma^2 \sum_{i=1}^N \left| R^{-1/2} \left( H y(t_i) - z_i\right) \right|^2 +  \| u_0 \|_V^2
\end{align*}
is bounded above as claimed.
\end{proof}

We now use the estimates of Section \ref{sec:anal} to prove that, under the conditions of Theorems \ref{thm:shorttime} and \ref{thm:alltime}, $J$ is convex on the ball $\|u^*\|_V \leq A$. Discarding nonnegative terms in (\ref{eqn:2var}), it suffices to show that
\begin{align*}
	\int_0^{t_N} \left| \left< r''(y) \eta^2, p \right> + \left< f''(y) \eta^2, p_x \right> \right| dt < \frac{1}{\sigma^2} \| v  \|^2_V.
\end{align*}
From Lemmas \ref{etaL4} and \ref{pL2} we have
\begin{align*}
	\left| \left< r''(y) \eta^2, p \right> \right| \leq C R_2 \|v^2\| e^{\alpha t+\beta(t_N-t)}
\end{align*}
and
\begin{align*}
	\int_0^{t_N} \left| \left< f''(y) \eta^2, p_x \right> \right| dt \leq C F_2 \|v^2\| \sqrt{t_N} e^{(\alpha+2\beta) t_N}.
\end{align*}
Combining these estimates, we find that
\begin{align}
	\int_0^{t_N} \left| \left< r''(y) \eta^2, p \right> + \left< f''(y) \eta^2, p_x \right> \right| dt \leq \Gamma \|v\|_V^2 \sqrt{t_N},
	\label{2estimate}
\end{align}
where $\Gamma$ depends on $A$ (from Lemma \ref{lem:apriori}), $t_N$, $N$ and $D$.

It is clear that the constant $\Gamma$ in (\ref{2estimate}) remains bounded as $t_N \rightarrow 0$. Therefore in proving Theorem \ref{thm:shorttime} it suffices to choose $t_N$ sufficiently small that $\Gamma \sqrt{t_N} < \sigma^{-2}$. Similarly for Theorem \ref{thm:alltime}, we observe that $\Gamma$ remains bounded as $\sigma \rightarrow 0$ so it is possible to choose $\sigma$ small enough that $\sigma^{-2} > \Gamma \sqrt{t_N}$.


\section{The non-uniqueness theorem}
Turning now to the proof of Theorem \ref{thm:non}, we must establish that $u = 0$ is a critical point of $J$, and the Hessian $D^2J(0)$ has at least $q$ negative eigenvalues. The key to the proof is the observation that the Euler--Lagrange equation depends on both the data and the prior, whereas the Hessian is independent of the prior. Thus we can first construct data to ensure $D^2 J(0)$ has the required number of negative eigenvalues, and then choose the prior term to ensure that $0$ is in fact a critical point of $J$.

The hypothesis $r(0) = 0$ ensures that $y(t)=0$ is the unique solution of (\ref{eqn:evol}) with $y(0) = 0$. Then because $r'(0) = 0$, the linearized equation reduces to the heat equation, $\eta_t = \eta_{xx}$, and the adjoint equation becomes the backward heat equation, $-p_t = p_{xx}$.

We compute the Hessian of $J$ in the direction of the first $q$ Fourier modes, setting $v_n = \sin(n \pi x)$ for $1 \leq n \leq q$, so $\|v_n\|^2_V = 1 / 2$. The corresponding solution to the linearized forward equation is
\begin{align*}
	\eta(x,t) = e^{-n^2 \pi^2 t}  \sin(n\pi x)
\end{align*}
and so
\begin{align*}
	\sum_{i=1}^N \left|R^{-1/2} H \eta(t_i) \right|^2  \leq \sum_{i=1}^N e^{-2 \pi^2 t_i}
\end{align*}
For each observation $z_i \in \mathbb{R}$ we have
\begin{align*}
	H^* z_i = z_i \sin(\pi x),
\end{align*}
hence the solution to the adjoint equation is given by
\begin{align*}
	p(x,t) = \sum_{ \{i: t < t_i \}}  z_i e^{-\pi^2(t-t_i)} \sin(\pi x)
\end{align*}
for $t \neq t_i$. We thus find that
\begin{align*}
	\left< r''(y) \eta^2, p \right> &= \sum_{ \{i: t < t_i \}} z_i e^{-2 n^2 \pi^2 t_i} e^{-\pi^2(t-t_i)} \left< \sin(\pi x), \sin^2(n \pi x) \right> \\
	&= \frac{4 n^2}{2\pi(4n^2-1)} \sum_{ \{i: t < t_i \}} z_i e^{-\pi^2 [t + (2 n^2 -1) t_i]}
\end{align*}
Integrating, we have
\begin{align*}
	\int_0^{t_N} \sum_{ \{i: t < t_i \}} z_i e^{-\pi^2 [t + (2 n^2 -1) t_i]} dt
	&=  \frac{1}{\pi^2} \sum_{i=1}^N z_i e^{- 2 n^2 \pi^2 t_i} \left( e^{\pi^2 t_i}  - 1 \right)
\end{align*}
and so we find from (\ref{eqn:2var}) that
\begin{align*}
	D^2 J(0)(v_n,v_n) \leq \frac{4 n^2}{2\pi^3 (4n^2-1)} \sum_{i=1}^N z_i e^{- 2 n^2 \pi^2 t_i} \left( e^{\pi^2 t_i}  - 1 \right) + \sum_{i=1}^N e^{-2\pi^2 t_i} + \frac{1}{2 \sigma^2}.
\end{align*}
All $N$ terms in the first summation are positive (with the exception of the $z_i$ coefficients) and decreasing with respect to $n$, so if we choose the $\{z_i\}$ sufficiently negative that $D^2 J(0)(v_q,v_q) < 0$, the Hessian will also be negative for all $v_n$ with $1 \leq n \leq q$.

To complete the proof, we choose the prior
\begin{align*}
	u_0 := \sigma^2 \Delta^{-1} p(0).
\end{align*}
It follows immediately from (\ref{eqn:EL}) that $u=0$ is a critical point of $J$.

\section*{Acknowledgments}
The author would like to thank Damon McDougall for numerous enlightening conversations throughout the preparation of this work. This research has been supported by the Office of Naval Research under the MURI grant N00014-11-1-0087.

\bibliographystyle{plain}
\bibliography{4Dvar}

\end{document}